\pgfplotsset{compat=1.14}
\def\inda{N_1}
\def\indi{N_2}
\def\MM{N}
\def\nn{\ell}
\theoremstyle{plain}
\newtheorem{thm}{Theorem}[section]
\newtheorem{lem}[thm]{Lemma}
\newtheorem{dfn}[thm]{Definition}
\theoremstyle{definition}
\theoremstyle{remark}
\newcommand\Es[1]{\mathbb{E}\left[#1\right]}
\renewcommand\Pr[1]{\mathbb{P}\left(#1\right)}
\title[Isomorphisms between random $d$-hypergraphs]{Isomorphisms between random $d$-hypergraphs}
 \author{Théo Lenoir}
\begin{document}
\maketitle

\begin{abstract}
We characterize the size of the largest common induced subgraph of two independent random uniform $d$-hypergraphs of different sizes with $d\geq 3$. More precisely, its distribution is asymptotically concentrated on two points, and we obtain as a consequence a phase transition for the inclusion of the smallest hypergraph in the largest one. This generalizes to uniform random $d$-hypergraphs the results of Chatterjee and Diaconis \cite{Diaco} for uniform random graphs.

Our proofs rely on the first and second moment methods.

\end{abstract}

\section{Introduction}
\subsection{Motivation and main result}

The question of the size of the largest common induced subgraph of two graphs is a well-studied problem in algorithmics \cite{Algo1, Algo2}. The probabilistic counterpart of this problem is to identify the distribution of the size of the largest common induced subgraph of two random graphs. It was raised by Chatterjee and Diaconis to understand the following seeming paradox. For every $p\in(0,1)$ the Erd\H{o}s-Renyi graph $G(\infty,p)$ is almost surely isomorphic to the Rado graph \cite{Cameron1997}, thus two $G(\infty,p)$ are almost surely isomorphic. However, the probability of having two independent $G(N,1/2)$ isomorphic is at most $N!2^{-\binom{N}{2}}=o(1)$ indicating a real difference in the behavior of the size of the largest common induced subgraph.

Indeed, in $2023$, Chatterjee and Diaconis proved in \cite{Diaco} that the typical size of the largest common induced subgraph of two independent uniform random graphs (equivalently two independent $G(N,1/2)$ Erd\H{o}s-Renyi graphs) is logarithmic. More precisely, there exists an explicit sequence of integers $(z_N)_{N\geq 1}$ such that as $N$ goes to $\infty$, with probability tending to one, the largest common induced subgraph has size $z_{N}$ or $z_{N}+1$. This phenomenon is called two-point concentration for the size of the largest common induced subgraph of two uniform random graphs.

Two-point concentration is a well-known behavior for several characteristics of random graphs. For example, for Erd\H{o}s-Renyi graphs $G(N,p)$:
\begin{itemize}
    \item for the clique number a two-point concentration phenomenon has been proved  by Matula \cite{Matula} when $p$ is constant;
    \item for the chromatic number Shamir and Spencer \cite{shamir1987sharp} proved a five-point concentration and Alon and Krivelevich \cite{alon} proved later a two-point concentration when $p=N^{-1/2-\delta}$ (see also \cite{achli, achli2});
    \item for the independence number, Bohman and Hofstad \cite{bohman2024two} proved a two-point concentration;
    \item for the domination number, Glebov and al. \cite{Dom1} proved a two-point concentration when $p\gg N^{-1/2}$ and Bohman and al. extended the result for $p\gg N^{-2/3}$.
\end{itemize} This phenomenon also occurs for other models of graphs, see for example \cite{Muller} for the chromatic number of a model of random geometric graph.

A natural question is to extend these two-point concentration results to hypergraphs, see \cite{dyer2015chromatic} for example for the chromatic number. A $d$-hypergraph\footnote{Also called $d$-uniform hypergraph in the litterature, however to avoid confusion with the probability notion of uniformity, we adopt this terminology.} $G=(V,E)$ is composed of a set of vertices $V$ and a set $E$ of hyperedges (\emph{i.e.} subsets of $V$) of size $d$. Hypergraphs appear naturally in a variety of contexts: to model satisfiability problems \cite{DBLP:conf/stoc/Marx10}, databases \cite{DBLP:journals/jacm/Fagin83}, recommendation systems in machine learning \cite{DBLP:conf/aaai/0013YYWC021} etc. For these reasons they are studied from both algorithmic and probabilistic points of view.

The aim of this paper is to extend the result of Chatterjee-Diaconis to uniform random $d$-hypergraphs of different sizes.

\begin{thm}\label{grosthm}
Let $(\indi(\inda))_{\inda\geq 1}$ be a sequence of positive integers such that, for all $\inda$, $\inda \geq \indi(\inda) $. Let $d\geq 3$ and $\Gamma_1$ and $\Gamma_2$ be two independent uniform random $d$-hypergraphs with respectively $\inda$ and $\indi(\inda)$ vertices. 
\begin{enumerate}

\item If $\liminf{\frac{\log_2(\indi(\inda))}{(\log_2\inda)^{1-\frac{1}{d-1}}}}>0$, 
 then w.h.p.\footnote{Throughout this paper we use the shortcut w.h.p. for sequences of events whose probability tends to one as $\inda$ tends to $+\infty$.} the largest common induced subgraph of $\Gamma_1$ and $\Gamma_2$ has size either $\lfloor x_{\inda}-(\log_2\inda)^{-1/d}\rfloor$ or  $\lfloor x_{\inda}+(\log_2\inda)^{-1/d}\rfloor$,
where\begin{equation}\label{defx}x_{\inda}=\left(d!\log_2(\inda\indi(\inda))\right)^{\frac{1}{d-1}} + \frac{d}{2}.\end{equation}
\item Assume on the contrary that $\log_2(\indi(\inda))=o\left((\log_2\inda)^{1-\frac{1}{d-1}}\right)$.\begin{enumerate} \item If for all $\inda$ large enough \begin{equation}\label{condi} \indi(\inda) >\bigg\lfloor \left(d! \log_2 \inda\right)^{\frac{1}{d-1}}+\frac{d}{2}-(\log_2\inda)^{-\frac{1}{d}}\bigg\rfloor,\end{equation} then there exists $(\mu_{\inda})_{\inda\geq 1}$ a positive sequence with limit $0$ such that w.h.p. the largest common induced subgraph of\, $\Gamma_1$ and $\Gamma_2$ has size either $\lfloor y_{\inda}-\mu_{\inda}\rfloor$ or  $\lfloor y_{\inda}+\mu_{\inda}\rfloor$,
where \begin{equation}\label{defy}y_{\inda}=\left(d!\log_2\inda\right)^{\frac{1}{d-1}} + \frac{d}{2}.\end{equation}
\item If for $\inda$ large enough, Eq. \eqref{condi} is not verified, then w.h.p. $\Gamma_2$ is an induced subgraph of $\Gamma_1$.
\end{enumerate}
\end{enumerate}
\end{thm}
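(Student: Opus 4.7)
My plan is to apply the first and second moment methods to the counting variable
\[
X_k = \#\bigl\{(A,B,\pi) : A\subseteq V(\Gamma_1),\, B\subseteq V(\Gamma_2),\, |A|=|B|=k,\, \pi \text{ is an isomorphism } \Gamma_1[A]\to\Gamma_2[B]\bigr\},
\]
so that the largest common induced subgraph has size at least $k$ if and only if $X_k\geq 1$. By independence and uniformity, each of the $\binom{k}{d}$ potential hyperedges in $\Gamma_1[A]$ matches the corresponding one in $\Gamma_2[B]$ independently with probability $1/2$, so
\[
\Es{X_k}=\binom{\inda}{k}\binom{\indi}{k}k!\cdot 2^{-\binom{k}{d}}.
\]
A refined Stirling expansion identifies the critical value of $k$ as $x_{\inda}$ in case~(1) and $y_\inda$ in case~(2a), with the additive $d/2$ arising from the second-order term of Stirling's approximation.

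For the upper bound in (1) and (2a), I set $k_+$ equal to the first integer above $x_\inda+(\log_2\inda)^{-1/d}$ (respectively $y_\inda+\mu_\inda$), substitute into the expansion, and verify that $\log_2\Es{X_{k_+}}\to-\infty$; Markov's inequality then forbids common induced subgraphs of size $k_+$ whp. For the lower bound I take $k_-$ equal to the lower endpoint of the window and apply Paley--Zygmund: it suffices to show $\Es{X_{k_-}^2}\leq (1+o(1))\Es{X_{k_-}}^2$.

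The heart of the argument is the second moment estimate. Using exchangeability I fix a reference triple $(A_0,B_0,\pi_0)$ and write
\[
\frac{\Es{X_{k_-}^2}}{\Es{X_{k_-}}^2} = \frac{1}{\Es{X_{k_-}}}\sum_{(A',B',\pi')}\Pr{(A',B',\pi')\text{ iso}\mid (A_0,B_0,\pi_0)\text{ iso}},
\]
then group triples $(A',B',\pi')$ by their effective overlap $t=|\{v\in A_0\cap A':\pi'(v)=\pi_0(v)\}|$ with the reference. The conditional probability factorizes as $2^{-\binom{k_-}{d}+r}$, where $r$ counts the $d$-subsets $T\subseteq A_0\cap A'$ for which the equality constraints $a_T=b_{\pi_0(T)}$ and $a_T=b_{\pi'(T)}$ coincide, and one bounds $r\leq \binom{t}{d}$ up to negligible accidental coincidences. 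The $t=0$ slice reconstructs $\Es{X_{k_-}}$, producing the main term $1$, and the remaining task is to show that each slice $t\geq 1$ contributes $o(1)$.

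The hard part is precisely controlling this overlap sum: the enhancement factor $2^{\binom{t}{d}}$ grows rapidly in $t$ and must be balanced against the combinatorial cost of forcing $t$ fixed points between the two triples. Using $k_-=\Theta\bigl((\log_2\inda)^{1/(d-1)}\bigr)$, the verification reduces to comparing $t\log_2(\inda\indi)$ against $\binom{k_-}{d}-\binom{k_--t}{d}$ across the regimes of small $t$, of $t\asymp k_-$, and of the boundary $t=k_-$ (which is the reference triple itself). Finally, case~(2b), where $\indi\leq\lfloor y_\inda-(\log_2\inda)^{-1/d}\rfloor$, is handled by the same strategy applied to $Y=\#\{\pi:V(\Gamma_2)\hookrightarrow V(\Gamma_1)\text{ induced embedding}\}$, which has expectation $\inda(\inda-1)\cdots(\inda-\indi+1)\cdot 2^{-\binom{\indi}{d}}\to\infty$ by the negation of Eq.~\eqref{condi}, and whose overlap sum is the natural analogue of the above with $k_-=\indi$.
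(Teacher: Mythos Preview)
Your overall strategy (first and second moment on the count of matching ordered embeddings) is exactly the paper's. The first-moment portion and the use of Markov and Paley--Zygmund are correct in outline. The genuine gap is in your second-moment decomposition.

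You group pairs of triples by the single parameter $t=|\{v\in A_0\cap A':\pi_0(v)=\pi'(v)\}|$ and assert that the conditional probability of the second isomorphism given the first is $2^{-\binom{k}{d}+r}$ with $r\le\binom{t}{d}$ ``up to negligible accidental coincidences''. This is where the argument fails. Write $s=|A_0\cap A'|$. Conditioning on $\Gamma_2$, the $\binom{k}{d}-\binom{s}{d}$ hyperedge constraints that involve at least one vertex of $A'\setminus A_0$ are genuinely fresh and contribute $2^{-(\binom{k}{d}-\binom{s}{d})}$, but the remaining $\binom{s}{d}$ constraints, for $T\subseteq A_0\cap A'$, become the $\Gamma_2$-measurable requirements $\mathbf{1}[\pi_0(T)\in E(\Gamma_2)]=\mathbf{1}[\pi'(T)\in E(\Gamma_2)]$. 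Only $\binom{t}{d}$ of these are tautologies; the other $\binom{s}{d}-\binom{t}{d}$ are \emph{not} independent events in $\Gamma_2$ (the sets $\pi_0(T)$ and $\pi'(T')$ can coincide or overlap across different $T$'s), so you cannot conclude that their joint probability is $2^{-(\binom{s}{d}-\binom{t}{d})}$. In other words, the ``accidental coincidences'' are not a lower-order nuisance: they are precisely the mechanism by which the second moment can blow up, and they must be controlled.

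The paper resolves this by a two-level decomposition: first by $m=|B\cap D|$ (the overlap in $\Gamma_1$), which isolates a $\Gamma_2$-measurable event $\mathcal{E}_2=\{\Gamma_{2,(a_{i_1},\dots,a_{i_m})}=\Gamma_{2,(c_{j_1},\dots,c_{j_m})}\}$, and then by a \emph{second} overlap parameter $j$ inside the analysis of $\Pr{\mathcal{E}_2}$ (their quantity $T_m$), corresponding to how many of the relevant $\Gamma_2$-vertices coincide. This double sum over $(m,j)$ is what your single parameter $t$ is trying to collapse, and it cannot be collapsed without the intermediate estimate on $T_m$. Your final comparison ``$t\log_2(\inda\indi)$ against $\binom{k_-}{d}-\binom{k_- - t}{d}$'' also does not match the quantity you set up (the enhancement you claimed is $2^{\binom{t}{d}}$, not $2^{\binom{k_-}{d}-\binom{k_- - t}{d}}$), which suggests the bookkeeping has drifted.

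For item~2b you propose a fresh second-moment argument on the embedding count $Y$. This is viable but duplicates work; the paper instead proves 2b by monotonicity: establish the lower bound for the boundary value $\indi=\lfloor y_{\inda}-(\log_2\inda)^{-1/d}\rfloor$ via item~2a, observe that then $\Gamma_2$ itself must be the common subgraph, and for smaller $\indi$ embed $\Gamma_2$ into a uniform hypergraph of that boundary size.
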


Note that the condition of item 2b is almost sharp: if $$\indi(\inda)=\bigg\lfloor \left(d! \log_2 \inda\right)^{\frac{1}{d-1}}+\frac{d}{2}-(\log_2\inda)^{-\frac{1}{d}}\bigg\rfloor+2,$$ by item 2a, for $\inda$ large enough the size of the largest common induced subgraph is at most $y_{\inda}+\mu_{\inda}<y_{\inda}-(\log\inda)^{-\frac{1}{d}}+1\leq \indi(\inda)$, thus w.h.p. $\Gamma_2$ is not included in $\Gamma_1$.

The proof of Theorem \ref{grosthm} relies on the first and second moment methods. Even if our proofs are inspired by the ones used in the Chatterjee-Diaconis paper \cite{Diaco}, several differences appear. First, in their article they only deal with the cases corresponding to 1 for $\indi(\inda)=\inda$ and 2.b of Theorem \ref{grosthm}. Secondly, our asymptotic computations do not cover the case of uniform random graphs (\emph{i.e.} the case $d=2$). Indeed, if we take for all $\inda$, $\indi(\inda)=\inda$ and $d=2$ in the formula for $x_{\inda}$, we obtain $x_{\inda}=4\log_2\inda+1$ which does not coincide with the value of Chatterjee-Diaconis (we do not recover in our theorem the $\log_2\log_2$ term from \cite[Thm 1.1]{Diaco}). It was rather unexpected for us that the generalization to $d\geq 3$
relies on different asymptotics than the case $d=2$, this will become apparent in the calculations during the proof of Lemma 4.

\subsection{Notations and proof strategy}

We begin by defining the notion of induced subgraph:
\begin{dfn}
    For any $d$-hypergraph $G=(V,E)$, and any $k$-tuple $I=(i_1,\dots,i_k)$ of distinct elements of $V$, the subgraph $G_I$ of $G$ induced by $I$ is the subgraph whose set of vertices is $\{1,\dots, k\}$, and such that, for every $J\subset \{1,\dots,k\}$ of size $d$, $J$ is an hyperedge of $G_I$ if and only if $\{i_j\ |\ j\in J\}$ is an hyperedge of $G$.
\end{dfn}

Here are the different notations that are used throughout the article:
    \begin{itemize}
    \item The notation $\log$ stands for the natural logarithm in base $e$. Most of the proofs will use this logarithm;
    
        \item $d$ is an integer greater or equal to $3$;
        \item $(\indi(\inda))_{\inda\geq 1}$ is a sequence such that for all $\inda$, $\inda\geq \indi(\inda)$. For convenience, we will write $\indi$ instead of $\indi(\inda)$;
        \item $\MM=\sqrt{\inda\indi}$;
        
        \item $\Gamma_1$ and $\Gamma_2$ are two independent uniform random $d$-hypergraphs of respective size $\inda$ and $\indi$: each possible hyperedge is present independently with probability $1/2$;
        \item  $a = \frac{2}{\log 2}d!$;
        \item $(\beta_{\inda})_{\inda\geq 1}$ is a bounded sequence such that, if we set for every $\inda\geq 1$, \begin{equation}\label{defden}
    \nn_{\inda}:= (a\log \MM)^{\frac{1}{d-1}} + \beta_{\inda},
\end{equation}
then $\nn_{\inda}$ is a positive integer. We will write $\nn$ instead of $\nn_{\inda}$ to lighten the notations;
\item for every positive integers $M$ and $k$ with $k\leq M$, $\mathcal{A}_{M,k}$ is the set of $k$-tuples of distinct elements of $\{1,\dots,M\}$. For all element $F$ of $\mathcal{A}_{M,k}$ and all $i\in\{1,\dots k\}$ we denote by $f_i$ the $i$-th element of $F$;
\item for every positive integers $M$ and $k$ with $k\leq M$, recall that $|\mathcal{A}_{M,k}|=M(M-1)\dots(M-k+1)=(M)_k$;

\item the random variable $W$ is defined by $W=|\{(I,J)\in \mathcal{A}_{\inda,\nn}\times \mathcal{A}_{\indi,\nn}, \Gamma_{1,I}=\Gamma_{2,J}\}|$.
    \end{itemize}

With the above notations, having a common subgraph of size $\nn$ is thus equivalent to $W>0$. The proof of Theorem \ref{grosthm} consists in estimating the first and second moments of $W$ for well-chosen values of $\nn$ and using the first and second moment methods. 

In Section \ref{sec1}, the first moment of $W$ is computed explicitly, and we prove that asymptotically with high probability the largest common induced subgraph is of size at most $\lfloor x_{\inda}+o(1)\rfloor$ with the first moment method. 

In Section \ref{sec2}, we give an upper bound on the second moment of $W$ (which cannot be explicitly computed) to prove that asymptotically with high probability if $\ell\leq \indi(\inda)$ for $\inda$ large enough, the largest common induced subgraph is of size at least $\lfloor x_{\inda}-o(1)\rfloor$ with the second moment method. 

We conclude by carefully applying both results to the different cases of Theorem \ref{grosthm}.

\section{First moment}\label{sec1}

We will start with a technical lemma. Denote by $$U_{\alpha,\inda}:=2\log \MM-\frac{(\alpha \nn-1)\dots (\alpha \nn-(d-1))}{d!}\log 2$$
for $\alpha\in (0,1]$ fixed. Since the asymptotic expansion of $U_{\alpha, N}$ will be used several times in our estimations, we compute it separately.
\begin{lem}\label{calcul}
For every $\alpha \in(0,1]$, we have the following asymptotic expansion as $\inda$ goes to infinity:
    $$U_{\alpha,\inda}=(2-2\alpha^{d-1})\log \MM+\alpha^{d-2}\frac{d-1}{2}\frac{a^{\frac{d-2}{d-1}}\log 2}{d!}(d-2\alpha \beta_{\inda})(\log \MM)^{\frac{d-2}{d-1}}+\mathcal{O}((\log \MM)^{\frac{d-3}{d-1}}).$$
 For $\alpha=1$, we get

    $$U_{1,\inda}=\frac{d-1}{2}\frac{a^{\frac{d-2}{d-1}}\log 2}{d!}(d-2\beta_{\inda})(\log \MM)^{\frac{d-2}{d-1}}+\mathcal{O}((\log \MM)^{\frac{d-3}{d-1}}).$$
\end{lem}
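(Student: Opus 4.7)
The plan is a direct binomial expansion. First I would expand the product as a polynomial in $\alpha\nn$:
$$(\alpha\nn-1)(\alpha\nn-2)\cdots(\alpha\nn-(d-1)) = (\alpha\nn)^{d-1} - \binom{d}{2}(\alpha\nn)^{d-2} + O(\nn^{d-3}),$$
the coefficient of $(\alpha\nn)^{d-2}$ being $-(1+2+\cdots+(d-1)) = -\binom{d}{2}$.

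Next I would expand the relevant powers of $\nn = (a\log\MM)^{1/(d-1)} + \beta_\inda$ using the binomial theorem. Since $(\beta_\inda)$ is bounded, the $k$-th term in the expansion of $\nn^j$ has order $(\log\MM)^{(j-k)/(d-1)}$; in particular,
\begin{align*}
\nn^{d-1} &= a\log\MM + (d-1)\,a^{\frac{d-2}{d-1}}\beta_\inda(\log\MM)^{\frac{d-2}{d-1}} + O\bigl((\log\MM)^{\frac{d-3}{d-1}}\bigr), \\
\nn^{d-2} &= a^{\frac{d-2}{d-1}}(\log\MM)^{\frac{d-2}{d-1}} + O\bigl((\log\MM)^{\frac{d-3}{d-1}}\bigr),
\end{align*}
while $\nn^{d-3} = O((\log\MM)^{\frac{d-3}{d-1}})$ absorbs the remainder from the first step.

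I would then substitute back into $U_{\alpha,\inda} = 2\log\MM - \frac{\log 2}{d!}(\alpha\nn-1)\cdots(\alpha\nn-(d-1))$. The crucial simplification is the identity $a\log 2/d! = 2$, built into the definition $a = \frac{2}{\log 2}d!$: it converts the leading contribution $\alpha^{d-1}\frac{a\log 2}{d!}\log\MM$ into $2\alpha^{d-1}\log\MM$, so the $\log\MM$-term in $U_{\alpha,\inda}$ collapses to $(2-2\alpha^{d-1})\log\MM$. Collecting the two contributions to the $(\log\MM)^{\frac{d-2}{d-1}}$ coefficient (one from $\nn^{d-1}$ via $\beta_\inda$, one from $\nn^{d-2}$) and factoring out $\alpha^{d-2}$ produces exactly the stated coefficient $\alpha^{d-2}\frac{d-1}{2}\frac{a^{\frac{d-2}{d-1}}\log 2}{d!}(d - 2\alpha\beta_\inda)$. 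The formula for $\alpha=1$ is then just the specialization in which $2-2\alpha^{d-1}$ vanishes.

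No step is genuinely hard; the only point requiring some care is the bookkeeping of error terms, namely verifying that the higher binomial contributions in $\nn^{d-1}, \nn^{d-2}$ and the $O(\nn^{d-3})$ from the product expansion all combine into an $O\bigl((\log\MM)^{\frac{d-3}{d-1}}\bigr)$ remainder once multiplied by $\log 2/d!$.
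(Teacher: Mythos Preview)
Your proposal is correct and follows essentially the same route as the paper's proof: expand the falling product to two leading terms in $\alpha\nn$, expand $\nn^{d-1}$ and $\nn^{d-2}$ via the binomial theorem, and use $a\log 2/d!=2$ to simplify the leading coefficient. The only cosmetic difference is that you isolate the identity $a\log 2/d!=2$ explicitly, whereas the paper just says ``using the value of $a$''.
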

\begin{proof}
Note that: 
\begin{align*}
U_{\alpha,\inda}&= 2\log \MM-\frac{\alpha^{d-1}\nn^{d-1}}{d!}\log{2}+{ d\choose 2}\frac{\alpha^{d-2}\nn^{d-2}}{d!}\log{2}+\mathcal{O}(\nn^{d-3})\\
&=2\log \MM-a\alpha^{d-1}\log \MM\frac{\log 2}{d!}-\alpha^{d-1}\frac{\log 2}{d!}(d-1)\beta_{\inda}a^{\frac{d-2}{d-1}}(\log \MM)^{\frac{d-2}{d-1}}\\
&\ \ \ \ \ \ \ \ \ +\alpha^{d-2}\frac{\log 2}{d!}{ d\choose 2}a^{\frac{d-2}{d-1}}(\log \MM)^{\frac{d-2}{d-1}}+\mathcal{O}\left((\log \MM)^{\frac{d-3}{d-1}}\right)
\end{align*}
by Eq. \eqref{defden} and since $(\beta_{\inda})_{\inda\geq 1}$ is bounded.
Thus, using the value of $a$, 
$$
U_{\alpha,\inda}=(2-2\alpha^{d-1})\log \MM+\alpha^{d-2}\frac{d-1}{2}\frac{a^{\frac{d-2}{d-1}}\log 2}{d!}(d-2\alpha \beta_{\inda})(\log \MM)^{\frac{d-2}{d-1}}+\mathcal{O}\left((\log \MM)^{\frac{d-3}{d-1}}\right)$$
concluding the computation.\end{proof}

The following lemma provides two asymptotic estimates for $\Es{W}$, both of which will be useful later.
\begin{lem}\label{lemesp}
    We have:\begin{align*} \bullet \text{ if $\indi<\ell$,\ \ \  \ }\Es{W}&=0\\ \bullet \text{ otherwise, }\ \Es{W}&=2^{-\binom{\nn}{d}} \MM^{2\nn}\frac{(\indi)_{\nn}}{\indi^{\nn}}(1+o(1))\\
    &=\exp\left((d-1)(d-2\beta_{\inda})\log \MM +\mathcal{O}\left((\log \MM)^{\frac{d-2}{d-1}}\right)\right).\end{align*}
\end{lem}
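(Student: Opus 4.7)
The plan is to compute $\Es{W}$ exactly via linearity of expectation, and then read off the two asymptotic forms, mainly by invoking Lemma \ref{calcul} with $\alpha = 1$.

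For the first bullet, I would simply note that if $\indi < \nn$ then $\mathcal{A}_{\indi,\nn} = \emptyset$, so no pair $(I,J)$ can appear in $W$ and $W=0$ almost surely. All the work is in the case $\indi \geq \nn$.

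Assuming $\indi\geq\nn$, I would fix a pair $(I,J)\in\mathcal{A}_{\inda,\nn}\times\mathcal{A}_{\indi,\nn}$ and observe that, because $\Gamma_1$ and $\Gamma_2$ are independent uniform random $d$-hypergraphs, each of the $\binom{\nn}{d}$ potential hyperedges of $\Gamma_{1,I}$ (resp.\ $\Gamma_{2,J}$) is present independently with probability $1/2$. Hence for a fixed potential hyperedge, the events ``it is in $\Gamma_{1,I}$'' and ``it is in $\Gamma_{2,J}$'' are independent fair coin flips, which agree with probability $1/2$; independence across the $\binom{\nn}{d}$ potential hyperedges gives $\Pr{\Gamma_{1,I}=\Gamma_{2,J}} = 2^{-\binom{\nn}{d}}$. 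Summing over $(I,J)$ by linearity yields the exact identity
\begin{equation*}
\Es{W} = (\inda)_\nn (\indi)_\nn \, 2^{-\binom{\nn}{d}}.
\end{equation*}

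For the first asymptotic form, I would factor $(\inda)_\nn(\indi)_\nn = \inda^\nn \indi^\nn \cdot \frac{(\inda)_\nn}{\inda^\nn}\cdot\frac{(\indi)_\nn}{\indi^\nn} = \MM^{2\nn}\cdot \frac{(\inda)_\nn}{\inda^\nn}\cdot\frac{(\indi)_\nn}{\indi^\nn}$, using $\MM=\sqrt{\inda\indi}$. Since $\inda\geq\indi$ forces $\inda\geq \MM$, and since $\nn=O((\log\MM)^{1/(d-1)})$, one has $\nn^2/\inda\to 0$, so $\log\bigl((\inda)_\nn/\inda^\nn\bigr) = \sum_{i=0}^{\nn-1}\log(1-i/\inda) = O(\nn^2/\inda) = o(1)$, giving $(\inda)_\nn/\inda^\nn = 1+o(1)$ and the stated expression. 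Note that I cannot do the same with $\indi$, which is why the factor $(\indi)_\nn/\indi^\nn$ remains in the formula.

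For the second (exponential) asymptotic, I would take logarithms and write
\begin{equation*}
\log\Es{W} = 2\nn\log\MM - \binom{\nn}{d}\log 2 + \log\frac{(\indi)_\nn}{\indi^\nn} + o(1).
\end{equation*}
The key observation is the algebraic identity $\nn\cdot U_{1,\inda} = 2\nn\log\MM - \binom{\nn}{d}\log 2$, which follows immediately from $\binom{\nn}{d} = \nn\cdot\frac{(\nn-1)\cdots(\nn-d+1)}{d!}$. Applying Lemma \ref{calcul} with $\alpha=1$ and using $a\log 2/d! = 2$ together with $\nn\sim(a\log\MM)^{1/(d-1)}$, the leading piece $\nn\cdot U_{1,\inda}$ simplifies to $(d-1)(d-2\beta_\inda)\log\MM + O((\log\MM)^{(d-2)/(d-1)})$. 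The only subtle point is to check that $\log((\indi)_\nn/\indi^\nn)$ also fits inside the error $O((\log\MM)^{(d-2)/(d-1)})$; the worst case is $\indi=\nn$, where this quantity is bounded by $\log(\nn!/\nn^\nn) = O(\nn\log\nn) = O((\log\MM)^{1/(d-1)}\log\log\MM)$, which since $d\geq 3$ is absorbed in $O((\log\MM)^{(d-2)/(d-1)})$. This is the main (mild) obstacle; once handled, combining all three pieces yields the claimed exponential form.
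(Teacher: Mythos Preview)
Your approach is essentially identical to the paper's: exact computation via linearity, factoring out $\MM^{2\nn}$, recognizing the main part of the exponent as $\nn\, U_{1,\inda}$, and invoking Lemma~\ref{calcul} with $\alpha=1$. There is one slip in the last step. You bound $\bigl|\log(\nn!/\nn^\nn)\bigr|$ only by $O(\nn\log\nn) = O\bigl((\log\MM)^{1/(d-1)}\log\log\MM\bigr)$ and then assert this is absorbed in $O\bigl((\log\MM)^{(d-2)/(d-1)}\bigr)$; for $d=3$ that assertion fails, since $(\log\MM)^{1/2}\log\log\MM$ is not $O\bigl((\log\MM)^{1/2}\bigr)$. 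The fix is immediate: from $\nn!\geq (\nn/e)^\nn$ one gets $\nn!/\nn^\nn\geq e^{-\nn}$, hence $\bigl|\log\bigl((\indi)_\nn/\indi^\nn\bigr)\bigr| = O(\nn) = O\bigl((\log\MM)^{1/(d-1)}\bigr)$, which \emph{is} $O\bigl((\log\MM)^{(d-2)/(d-1)}\bigr)$ for every $d\geq 3$. This sharper bound is exactly what the paper uses, and its closing parenthetical remark flags that $d\geq 3$ is needed precisely at this absorption step.
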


\begin{proof}
If $\indi<\ell$, then $W=0$ which implies the first equality. Otherwise assume that $\indi\geq \ell$. By linearity of expectation, 
\begin{align*}
\Es{W}&=|\mathcal{A}_{\inda,\nn}|\times|\mathcal{A}_{\indi,\nn}|\Pr{\Gamma_{1,(1,\dots,\nn)}=\Gamma_{2,(1,\dots,\nn)}}\\
&=\prod\limits_{i=0}^{\nn-1}\left(\inda-i\right)\prod\limits_{i=0}^{\nn-1}\left(\indi-i\right)2^{-\binom{\nn}{d}},
\end{align*}
thus $$\Es{W}=2^{-\binom{\nn}{d}}\MM^{2\nn}\prod\limits_{i=0}^{\nn-1}\left(1-\frac{i}{\inda}\right)\frac{(\indi)_{\nn}}{\indi^{\nn}}.$$
Since
$$\prod\limits_{i=0}^{\nn-1}\left(1-\frac{i}{\inda}\right)=\exp\left(\mathcal{O}\left(\sum\limits_{i=0}^{\nn-1}\frac{i}{\inda}\right)\right)=\exp\left(\mathcal{O}\left(\frac{\nn^2}{\inda}\right)\right)=\exp\left(o(1)\right),$$

we have $\Es{W}=(1+o(1))2^{-\binom{\nn}{d}}\MM^{2\nn}\frac{(\indi)_{\nn}}{\indi^{\nn}}$.

Moreover since $\indi\geq \nn$,
\begin{align*}1\geq \frac{(\indi)_{\nn}}{\indi^{\nn}}=\prod\limits_{i=0}^{\nn-1}\left(1-\frac{i}{\indi}\right)\geq \prod\limits_{i=0}^{\nn-1}\left(1-\frac{i}{\nn}\right)=\frac{\nn!}{\nn^{\nn}} =\exp\left(\mathcal{O}(\nn)\right),\end{align*}

therfore 
\begin{align}\label{eqbof}
    \frac{(\indi)_{\nn}}{\indi^{\nn}}&=\exp\left(\mathcal{O}(\nn)\right),
\end{align}

and
\begin{align*}
    \Es{W} &= \exp\left(2\nn \log \MM+ \mathcal{O}(\nn)-\frac{\nn(\nn-1)\dots(\nn-(d-1))}{d!}\log 2 \right) \\
&= \exp\left(\mathcal{O}(\nn)+\nn U_{1,\inda}\right). 
\end{align*}
Finally from Lemma \ref{calcul} we get:
\begin{align*}
    \Es{W} &= \exp\left(\mathcal{O}(\nn)+\left((a\log \MM)^{\frac{1}{d-1}}+\beta_{\inda}\right)\left(\frac{d-1}{2}\frac{a^{\frac{d-2}{d-1}}\log 2}{d!}(d-2\beta_{\inda})(\log \MM)^{\frac{d-2}{d-1}}+\mathcal{O}\left((\log \MM)^{\frac{d-3}{d-1}}\right)\right)\right)\\
&= \exp\left( \mathcal{O}((\log \MM)^{\frac{d-2}{d-1}})+(d-1)(d-2\beta_{\inda})\frac{a\log 2}{2d!}\log \MM\right) \\
&=\exp\left(\mathcal{O}((\log \MM)^{\frac{d-2}{d-1}})+(d-1)(d-2\beta_{\inda})\log \MM \right) \quad \text{as $a=\frac{2 d!}{\log 2}$}
\end{align*}concluding the proof. (Note that we used $d\geq 3$ when the $\mathcal{O}(\nn)=\mathcal{O}\left((\log \MM)^{\frac{1}{d-1}}\right)$ was absorbed by $\mathcal{O}\left((\log \MM)^{\frac{d-3}{d-1}}\right)$.)\end{proof}
With the first moment method we now deduce the following lemma.
\begin{lem}[Towards the upper bound for Theorem \ref{grosthm}]\label{eq2}
Let $(\varepsilon_{\inda})_{\inda\geq 1}$ be a bounded sequence such that $\varepsilon_{\inda}\gg (\log \MM)^{-\frac{1}{d-1}}$. Then w.h.p. the largest common induced subgraph of $\Gamma_1$ and $\Gamma_2$ has size less or equal to $\lfloor x_{\inda}+\varepsilon_{\inda}\rfloor$:
$$\Pr{\exists (I,J)\in \mathcal{A}_{\inda,\lfloor x_{\inda}+\varepsilon_{\inda}\rfloor+1}\times \mathcal{A}_{\indi(\inda),\lfloor x_{\inda}+\varepsilon_{\inda}\rfloor+1}\ \text{such that}\  \Gamma_{1,I}=\Gamma_{2,J}}\xrightarrow[\inda\to\infty]{} 0$$
where $(x_{\inda})_{\inda\geq 1}$ is defined in Eq. \eqref{defx}.\end{lem}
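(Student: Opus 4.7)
The plan is to apply the first moment method (Markov's inequality) with the choice $\nn := \lfloor x_\inda + \varepsilon_\inda \rfloor + 1$. I would begin by disposing of the trivial range: if $\indi < \nn$, then no $\nn$-tuple of distinct vertices exists in $\Gamma_2$, so $W = 0$ deterministically and the probability in the statement is $0$. From now on one may therefore assume $\indi \geq \nn$, placing us in the non-degenerate case of Lemma \ref{lemesp}.

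Next I would match this choice of $\nn$ with the framework of Eq.~\eqref{defden}. Using $\log_2(\inda\indi) = 2\log\MM/\log 2$ together with $a = 2d!/\log 2$, one obtains $x_\inda = (a\log\MM)^{1/(d-1)} + d/2$. Writing $\delta_\inda := 1 - \{x_\inda + \varepsilon_\inda\} \in (0,1]$, the identity $\nn = \lfloor x_\inda + \varepsilon_\inda \rfloor + 1 = x_\inda + \varepsilon_\inda + \delta_\inda$ gives
$$\beta_\inda = \frac{d}{2} + \varepsilon_\inda + \delta_\inda,$$
which is bounded since $(\varepsilon_\inda)$ is. The crucial consequence is that $d - 2\beta_\inda = -2(\varepsilon_\inda + \delta_\inda) \leq -2\varepsilon_\inda$, so the leading coefficient appearing in the conclusion of Lemma \ref{lemesp} is strictly negative and of order $\varepsilon_\inda$.

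Plugging into Lemma \ref{lemesp} then yields
$$\Es{W} \leq \exp\left(-2(d-1)\varepsilon_\inda \log \MM + \mathcal{O}\left((\log \MM)^{\frac{d-2}{d-1}}\right)\right).$$
The hypothesis $\varepsilon_\inda \gg (\log \MM)^{-1/(d-1)}$ is exactly what ensures $\varepsilon_\inda \log\MM \gg (\log\MM)^{(d-2)/(d-1)}$; the linear main term then dominates the sublinear error, forcing $\Es{W} \to 0$. A final application of Markov's inequality $\Pr{W \geq 1} \leq \Es{W}$ to the event in the statement concludes.

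The only real (and still minor) obstacle is the bookkeeping of fractional parts coming from the floor function, since this is what determines the precise quantitative hypothesis $\varepsilon_\inda \gg (\log\MM)^{-1/(d-1)}$ needed to beat the $\mathcal{O}((\log\MM)^{(d-2)/(d-1)})$ error term supplied by Lemma \ref{lemesp}. Once $\beta_\inda$ has been identified as above, the rest amounts to a straightforward comparison of the two growth orders $\log\MM$ and $(\log\MM)^{(d-2)/(d-1)}$.
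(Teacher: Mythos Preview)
Your proposal is correct and follows essentially the same argument as the paper: both set $\nn=\lfloor x_{\inda}+\varepsilon_{\inda}\rfloor+1$, identify the corresponding $\beta_{\inda}=\tfrac{d}{2}+\varepsilon_{\inda}+\delta_{\inda}$ with $\delta_{\inda}\in(0,1]$, feed the resulting inequality $d-2\beta_{\inda}\le -2\varepsilon_{\inda}$ into Lemma~\ref{lemesp}, and conclude via Markov. Your handling of the trivial case $\indi<\nn$ and the fractional-part bookkeeping matches the paper's as well.
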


\begin{proof}
For $\inda\geq 2$, choose $$\beta_{\inda}=\lfloor (a\log \MM)^{\frac{1}{d-1}}+\frac{d}2+\varepsilon_{\inda}\rfloor+1-(a\log \MM)^{\frac{1}{d-1}}.$$ Since $\frac{d}2+\varepsilon_{\inda}<\beta_{\inda}\leq \frac{d}{2}+\varepsilon_{\inda}+1$, we get that $d-2\beta_{\inda}\leq -2\varepsilon_{\inda}$ and that $(\beta_{\inda})_{\inda\geq 1}$ is bounded. Moreover, $\nn=\lfloor x_{\inda}+\varepsilon_{\inda}\rfloor+1$.

We have for any $\inda$ such that $\indi(\inda)\geq \ell$,
\begin{align*}
\Pr{W>0}&\leq \Es{W}\\
&= \exp\left((d-1)(d-2\beta_{\inda})\log \MM +\mathcal{O}\left((\log \MM)^{\frac{d-2}{d-1}}\right)\right)\quad \text{by Lemma \ref{lemesp},}\\
&\leq \exp\left(-2\varepsilon_{\inda}(d-1)\log \MM +\mathcal{O}\left((\log \MM)^{\frac{d-2}{d-1}}\right)\right)\quad \text{as $d-2\beta_{\inda}\leq -2\varepsilon_{\inda}$.}
\end{align*}
As when $\indi(\inda)< \ell$, $\Pr{W>0}=0$, it holds for all $\inda$. Thus $\Pr{W>0}$ goes to $0$ as $\inda$ goes to infinity, which proves Lemma \ref{eq2}.\end{proof}

\section{Second moment}\label{sec2}
We now tackle the estimation of the second moment.
\begin{lem}
  There exist positive constants $K,K'$ which do not depend on $\inda$ such that  $$\Es{W^2}\leq (1+o(1))\Es{W}^2\left(1+K\exp\left(-K'(d-2\beta_{\inda})\log \MM+\mathcal{O}\left((\log \MM)^{\frac{d-2}{d-1}}\log(\log \MM)\right)\right)\right).$$
\end{lem}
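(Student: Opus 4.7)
The strategy is to apply the standard second moment expansion
\begin{equation*}
\mathbb{E}[W^2] = \sum_{(I,J),(I',J')} \mathbb{P}\bigl(\Gamma_{1,I}=\Gamma_{2,J},\; \Gamma_{1,I'}=\Gamma_{2,J'}\bigr),
\end{equation*}
and to classify pairs by their \emph{matched-position set}
$L = L(I,J,I',J') := \{(k,k')\in\{1,\dots,\nn\}^2 : i_k = i'_{k'} \text{ and } j_k = j'_{k'}\}$,
which is necessarily a partial bijection; write $m = |L|$.

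First I would establish the probability bound $\mathbb{P}(\Gamma_{1,I}=\Gamma_{2,J},\, \Gamma_{1,I'}=\Gamma_{2,J'}) \leq 2^{-(2\binom{\nn}{d} - \binom{m}{d})}$. Each event is a system of $\binom{\nn}{d}$ $\mathbb{F}_2$-linear constraints coupling $\Gamma_1$- and $\Gamma_2$-hyperedge indicators; for each $d$-subset of $L$ the two events contribute identical constraints, giving $\binom{m}{d}$ redundancies. A rank computation on the bipartite constraint graph (vertices = involved hyperedges, edges = constraints) then yields the displayed bound. Second, I would count: fixing $(I,J)$ and a partial bijection $L$ of size $m$ ($\binom{\nn}{m}^2 m!$ choices), the values of $I'$ and $J'$ are forced on the $m$ matched positions and can be completed with distinct vertices on the remaining $\nn-m$ positions in $(\inda-m)_{\nn-m}(\indi-m)_{\nn-m}$ ways. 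Dividing the resulting upper bound by $\mathbb{E}[W]^2 = ((\inda)_\nn(\indi)_\nn)^2\, 2^{-2\binom{\nn}{d}}$ and using $(\inda - m)_{\nn-m}/(\inda)_\nn = 1/(\inda)_m$, I obtain
\begin{equation*}
\frac{\mathbb{E}[W^2]}{\mathbb{E}[W]^2} \leq \sum_{m=0}^{\nn} R_m, \qquad R_m := \binom{\nn}{m}^2 m! \, \frac{2^{\binom{m}{d}}}{(\inda)_m (\indi)_m}.
\end{equation*}

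It remains to estimate this sum. The term $R_0 = 1$ is the main contribution. For $1 \leq m < d$ one has $\binom{m}{d} = 0$ and the trivial bound $R_m = O((\nn/\MM)^{2m})$, hence $\sum_{m=1}^{d-1} R_m = O(\nn^2/\MM^2) = o(1)$, which is absorbed into the $1+o(1)$ factor. For $m \geq d$, writing $\alpha = m/\nn$ and exploiting the identity
\begin{equation*}
\binom{\alpha\nn}{d}\log 2 \;=\; \alpha\nn\,(2\log\MM - U_{\alpha,\inda}),
\end{equation*}
which follows immediately from the definition of $U_{\alpha,\inda}$, together with Stirling's approximation, I would derive
\begin{equation*}
\log R_{\alpha\nn} \leq -\alpha\nn\, U_{\alpha,\inda} + O\bigl(\nn\log\nn + \nn^2/\indi\bigr).
\end{equation*}
By Lemma \ref{calcul}, the leading term $(2-2\alpha^{d-1})\log\MM$ of $U_{\alpha,\inda}$ is strictly positive for $\alpha<1$ and vanishes at $\alpha=1$, so $\alpha\nn U_{\alpha,\inda}$ is minimized at $\alpha=1$, where Lemma \ref{calcul} gives $\nn U_{1,\inda} = (d-1)(d-2\beta_\inda)\log\MM + O((\log\MM)^{(d-2)/(d-1)})$. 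Bounding $\sum_{m=d}^\nn R_m \leq \nn \cdot \max_m R_m$ and absorbing the polynomial factor $\nn = O((\log\MM)^{1/(d-1)})$ and the Stirling remainder $\nn\log\nn = O((\log\MM)^{1/(d-1)} \log\log\MM)$ into the $O((\log\MM)^{(d-2)/(d-1)} \log\log\MM)$ error term yields the claimed expression with $K'=d-1$ and a suitable constant $K$.

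The main obstacle is the rank argument underlying the probability bound: one must carefully verify that the bipartite constraint graph has rank at least $2\binom{\nn}{d} - \binom{m}{d}$, ruling out or suitably controlling additional cycle-induced dependencies beyond those coming from the $\binom{m}{d}$ identifications along $L$. This is typically handled either by conditioning on $\Gamma_1$ restricted to the vertex set of $I$ and enumerating the remaining constraints case by case (they reduce to equalities between pairs of uniform $\mathbb{F}_2$ variables), or by a direct combinatorial analysis of the paths/cycles of the constraint graph exploiting the product structure.
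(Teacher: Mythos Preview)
Your key probability bound $\mathbb{P}(\Gamma_{1,I}=\Gamma_{2,J},\,\Gamma_{1,I'}=\Gamma_{2,J'}) \leq 2^{-(2\binom{\nn}{d}-\binom{m}{d})}$ with $m=|L|$ is false, and this is a genuine gap rather than a technical verification to be filled in. Take $d=\nn=3$, $I=I'=(1,2,3)$, $J=(1,2,3)$, $J'=(2,1,3)$: then $L=\{(3,3)\}$, so $m=1$ and $\binom{m}{d}=0$, and your bound predicts probability at most $2^{-2}$. But the two events are identical (each says the unique hyperedge on $\{1,2,3\}$ agrees between $\Gamma_1$ and $\Gamma_2$), so the true probability is $2^{-1}$. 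The phenomenon is general: whenever $I,I'$ share a $d$-subset \emph{as a set} and $J,J'$ share the corresponding $d$-subset \emph{as a set}, but the two position-bijections differ, you get a redundant constraint not witnessed by $L$. The rank of the constraint system can therefore drop well below $2\binom{\nn}{d}-\binom{|L|}{d}$; the inequality you hope to verify is simply wrong, not merely delicate.

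The paper avoids this by an asymmetric two-stage decomposition. It first splits only by $m=|B\cap D|$, the overlap of the $\Gamma_1$-tuples, and conditions on $\Gamma_2$. Conditionally on $\Gamma_2$ the joint probability factors \emph{exactly} as $2^{-2\binom{\nn}{d}+\binom{m}{d}}\mathbbm{1}_{\mathcal{E}_2}$, where $\mathcal{E}_2$ is the $\Gamma_2$-measurable internal-isomorphism event $\{\Gamma_{2,(a_{i_1},\dots,a_{i_m})}=\Gamma_{2,(c_{j_1},\dots,c_{j_m})}\}$. Taking expectation and then bounding $\mathbb{P}(\mathcal{E}_2)$ by a \emph{second} splitting (over the overlap $j$ of the two $m$-tuples inside $\Gamma_2$) yields a double sum over $(m,j)$; your single-index sum $\sum_m R_m$ is too coarse to capture this structure. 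Once the correct double sum is in hand, the asymptotic analysis via $U_{\alpha,\inda}$ and Lemma~\ref{calcul} that you sketch in your last two paragraphs is essentially what the paper carries out.
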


\begin{proof}
Note that if $\ell>\indi$, $W=0$ thus $\Es{W^2}=0=\Es{W}$ which implies the lemma. Now assume that $\ell\leq \indi$.
First we expand the second moment:
\begin{align*}
\Es{W^2}&=\sum\limits_{(A,C)\in \mathcal{A}_{\indi,\nn}^2, (B,D)\in \mathcal{A}_{\inda,\nn}^2}\Pr{\Gamma_{2,A}=\Gamma_{1,B},\Gamma_{2,C}=\Gamma_{1,D}}.
\end{align*}
We now split the computation according to the size of $B\cap D$. If $B\cap D$ has size $m$, there exist $1\leq i_1<\dots<i_m\leq \nn$ and $m$ distinct integers $j_1,\dots, j_m\in\{1,\dots, \nn\}$ such that $b_{i_1}=d_{j_1},\dots, b_{i_m}=d_{j_m}$. Thus
\begin{align*}
\Es{W^2}&=S_0+\sum\limits_{m=1}^{\nn} S_m
\end{align*}
where $$S_0=\sum\limits_{(A,C)\in \mathcal{A}_{\indi,\nn}^2, (B,D)\in \mathcal{A}_{\inda,\nn}^2, B\cap D=\emptyset}\Pr{\Gamma_{2,A}=\Gamma_{1,B},\Gamma_{2,C}=\Gamma_{1,D}}$$
and for $m\in\{1,\dots, \nn\}$ 

$$S_m=\sum\limits_{(A,C)\in \mathcal{A}_{\indi,\nn}^2}\sum\limits_{1\leq i_1<\dots<i_m\leq \nn}\sum\limits_{(j_1,\dots,j_m)\in \mathcal{A}_{\nn,m}}\sum\limits_{\underset{\underset{ \forall p,\ b_{i_{p}}=d_{j_{p}}}{|B\cap D|=m}}{(B,D)\in \mathcal{A}_{\inda,\nn}^2}} \Pr{\Gamma_{2,A}=\Gamma_{1,B},\Gamma_{2,C}=\Gamma_{1,D}}.$$The quantity $S_0$ can be explicitly computed. If $(A,C)\in \mathcal{A}_{\indi,\nn}^2$ and $(B,D)\in \mathcal{A}_{\inda,\nn}^2$ are such that $B\cap D=\emptyset$, then by conditioning on $\Gamma_2$, 

$$\Pr{\Gamma_{2,A}=\Gamma_{1,B},\Gamma_{2,C}=\Gamma_{1,D}}=2^{-2\binom{\nn}{d}}.$$
Thus \begin{equation}\label{s0}S_0= (\inda)_{2\nn}(\indi)_{\nn}^22^{-2\binom{\nn}{d}}\leq \MM^{4\nn}\frac{(\indi)_{\nn}^2}{\indi^{2\nn}}2^{-2\binom{\nn}{d}}.\end{equation}

Now our aim is to simplify the expression of $S_m$ for $m\geq 1$. For every $m\geq 1$, every $(A,C)\in \mathcal{A}_{\indi,\nn}^2, (B,D)\in \mathcal{A}_{\inda,\nn}^2$, every $1\leq i_1<\dots<i_m\leq \nn$, every $(j_1,\dots,j_m)\in \mathcal{A}_{\nn,m}$ such that $|B\cap D|=m$ and for all $p \in\{1,\dots,m\}$, $b_{i_{p}}=d_{j_{p}}$, the event $\{\Gamma_{2,A}=\Gamma_{1,B},\Gamma_{2,C}=\Gamma_{1,D}\}$ can be written as the intersection of three events $\mathcal{E}_1, \mathcal{E}_2$ and $\mathcal{E}_3$ (see Fig. \ref{fig}).  
\begin{itemize}
    \item The first one is $\mathcal{E}_1=\{\Gamma_{2,A}=\Gamma_{1,B}\}$. This event forces all the $\binom{\nn}{d}$ hyperedges of $\Gamma_{1,B}$. Conditionally on $\Gamma_2$, $\mathcal{E}_1$ has probability $2^{-\binom{\nn}{d}}$. 

    \item  The second event is $\mathcal{E}_2=\{\Gamma_{2,(c_{j_1},\dots,c_{j_m})}=\Gamma_{2,(a_{i_1},\dots,a_{i_m})}\}$.  Moreover it is $\Gamma_2$-measurable.
        \item The last event $\mathcal{E}_3$ is that all the hyperedges whose vertices are not all in $\{j_1,\dots, j_{m}\}$ are the same in $\Gamma_{2,C}$ and $\Gamma_{1,D}$. Conditionally on $\Gamma_2$, $\mathcal{E}_3$ is independent of $\mathcal{E}_1$ (as no involved hyperedge of $\Gamma_1$ has only vertices in $B\cap D$) and has probability $2^{-\binom{\nn}{d}+\binom{m}{d}}$ since it involves only the hyperedges whose vertices are in $D$ but not all in $B\cap D$. 
    \end{itemize} 
    \begin{center}
\begin{figure}[htbp]
\includegraphics[scale=1]{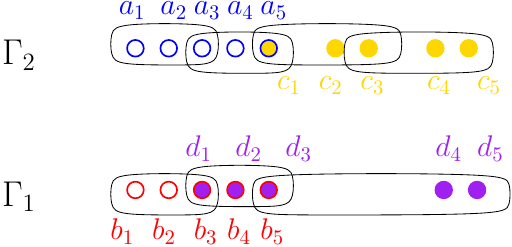}
\caption{An example of a $3$-hypergraph for which $\{\Gamma_{2,A}=\Gamma_{1,B},\Gamma_{2,C}=\Gamma_{1,D}\}$ occurs with $m=3$}\label{fig}
\end{figure}
\end{center}
    Note that $\mathcal{E}_1\cap \mathcal{E}_2=\mathcal{E}_1\cap \{\Gamma_{2,(c_{j_1},\dots,c_{j_m})}=\Gamma_{1,(d_{j_1},\dots,d_{j_m})}\}$ since $\mathcal{E}_1$ implies that $\Gamma_{1,(b_{i_1},\dots,b_{i_m})}=\Gamma_{2,(a_{i_1},\dots,a_{i_m})}$ and since $\Gamma_{1,(b_{i_1},\dots,b_{i_m})}=\Gamma_{1,(d_{j_1},\dots,d_{j_m})}$. So $\mathcal{E}_1\cap \mathcal{E}_2\cap \mathcal{E}_3=\{\Gamma_{2,A}=\Gamma_{1,B},\Gamma_{2,C}=\Gamma_{1,D}\}$ and 
    \begin{align*}
\Pr{\Gamma_{2,A}=\Gamma_{1,B},\Gamma_{2,C}=\Gamma_{1,D}}&=\Es{\Pr{\mathcal{E}_1\cap \mathcal{E}_2\cap \mathcal{E}_3|\Gamma_2}}\\
&=\Es{\mathbbm{1}_{\mathcal{E}_2}2^{-2\binom{\nn}{d}+\binom{m}{d}}}\\
&=2^{-2\binom{\nn}{d}+\binom{m}{d}}\Pr{\Gamma_{2,(a_{i_1},\dots,a_{i_m})}=\Gamma_{2,(c_{j_1},\dots,c_{j_m})}}.
\end{align*}
Therefore
$$\scalebox{0.8}{
$S_m=(\inda)_{\nn}(\inda-\nn)_{\nn-m}\sum\limits_{(A,C)\in\mathcal{A}_{\indi,\nn}^2}\sum\limits_{1\leq i_1<\dots<i_m\leq \nn}\sum\limits_{(j_1,\dots,j_m)\in \mathcal{A}_{\nn,m}}2^{-2\binom{\nn}{d}+\binom{m}{d}}\Pr{\Gamma_{2,(a_{i_1},\dots,a_{i_m})}=\Gamma_{2,(c_{j_1},\dots,c_{j_m})}}.$}$$

Fix $m\in \{1,\dots, \nn\}$, $A\in \mathcal{A}_{\indi,\nn}$, $1\leq i_1<\dots<i_m\leq \nn$ and 
$(j_1,\dots,j_m)\in \mathcal{A}_{\nn,m}$. Since $\Gamma_2$ is invariant by relabelling, we get:

$$\sum\limits_{C\in \mathcal{A}_{\indi,\nn}}\Pr{\Gamma_{2,(a_{i_1},\dots,a_{i_m})}=\Gamma_{2,(c_{i_1},\dots,c_{j_m})}}=\sum\limits_{C\in \mathcal{A}_{\indi,\nn}}\Pr{\Gamma_{2,(1,\dots,m)}=\Gamma_{2,(c_{i_1},\dots,c_{j_m})}}.$$

Thus 
\begin{align}
S_m&=(\indi)_{\nn}(\inda)_{\nn}(\inda-\nn)_{\nn-m}\binom{\nn}{m}\sum\limits_{C\in\mathcal{A}_{\indi,\nn}}\sum\limits_{(j_1,\dots,j_m)\in \mathcal{A}_{\nn,m}}2^{-2\binom{\nn}{d}+\binom{m}{d}}\Pr{\Gamma_{2,(1,\dots,m)}=\Gamma_{2,(c_{j_1},\dots,c_{j_m})}}\notag\\
&=(\indi)_{\nn}(\inda)_{\nn}(\inda-\nn)_{\nn-m}\binom{\nn}{m}(\indi-m)_{\nn-m}(\nn)_m\sum\limits_{(e_1,\dots,e_m)\in \mathcal{A}_{\indi,m}}2^{-2\binom{\nn}{d}+\binom{m}{d}}\Pr{\Gamma_{2,(1,\dots,m)}=\Gamma_{2,(e_1,\dots,e_m)}}\notag\\
&\leq \MM^{4\nn-2m}\nn^{2m}2^{-2\binom{\nn}{d}+\binom{m}{d}}\sum\limits_{(e_1,\dots,e_m)\in \mathcal{A}_{\indi,m}}\Pr{\Gamma_{2,(1,\dots,m)}=\Gamma_{2,(e_1,\dots,e_m)}}\label{sm}.
\end{align}

Fix $1\leq m \leq \nn$, set $T_m= \sum\limits_{(e_1,\dots,e_m)\in \mathcal{A}_{\indi,m}}\Pr{\Gamma_{2,(1,\dots,m)}=\Gamma_{2,(e_1,\dots,e_m)}}$. Using the same techniques as the ones used since the beginning of the proof (splitting according to the intersection as well as the independence of the edges, and, for the last equality, the invariance by relabelling), we obtain the following equalities:
\begin{align*}
    T_m&=\sum\limits_{j=0}^m\sum\limits_{{1\leq p_1<\dots<p_j\leq m}}\sum\limits_{\underset{\forall i,\ e_{p_i}\in\{1,\dots,m\}}{\underset{j=|\{1,\dots,m\}\cap\{e_1,\dots,e_m\}|}{(e_1,\dots,e_m)\in \mathcal{A}_{\indi,m}}}}\Pr{\Gamma_{2,(1,\dots,m)}=\Gamma_{2,(e_1,\dots,e_m)}}\\
    &=\sum\limits_{j=0}^m\sum\limits_{{1\leq p_1<\dots<p_j\leq m}}2^{-\binom{m}{d}+\binom{j}{d}}\sum\limits_{\underset{\forall i,\ e_{p_i}\in\{1,\dots,m\}}{\underset{j=|\{1,\dots,m\}\cap\{e_1,\dots,e_m\}|}{(e_1,\dots,e_m)\in \mathcal{A}_{\indi,m}}}}\Pr{\Gamma_{2,(p_1,\dots,p_{j})}=\Gamma_{2,(e_{p_1},\dots,e_{p_{j}})}}\\
     &=\sum\limits_{j=0}^m(\indi-m)_{m-j}2^{-\binom{m}{d}+\binom{j}{d}}\sum\limits_{1\leq p_1<\dots<p_j\leq m}\sum\limits_{(f_1,\dots,f_j)\in \mathcal{A}_{m,j}}\Pr{\Gamma_{2,(p_1,\dots,p_j)}=\Gamma_{2,(f_1,\dots,f_{j})}}\\
     &=\sum\limits_{j=0}^m(\indi-m)_{m-j}\binom{m}{j}2^{-\binom{m}{d}+\binom{j}{d}}\sum\limits_{(f_1,\dots,f_j)\in \mathcal{A}_{m,j}}\Pr{\Gamma_{2,(1,\dots,j)}=\Gamma_{2,(f_1,\dots,f_{j})}}.\end{align*} Therefore \begin{equation}\label{TNm}T_m\leq \sum\limits_{j=0}^m\indi^{m-j}m^j2^{-\binom{m}{d}+\binom{j}{d}}m^j.\end{equation}

Throughout the next computations, $K,K',K_1,K_2, K_3$ will be arbitrary positive constants which do not depend on $\inda$. Combining Eqs. \eqref{s0}, \eqref{sm} and \eqref{TNm}, we get
\begin{align*}
\Es{W^2}&\leq \MM^{4\nn}\frac{(\indi)_{\nn}^2}{\indi^{2\nn}}2^{-2\binom{\nn}{d}}+\sum\limits_{m=1}^{\nn}\MM^{4\nn-2m}\nn^{2m}2^{-2\binom{\nn}{d}+\binom{m}{d}}\sum\limits_{j=0}^m\indi^{m-j}m^j2^{-\binom{m}{d}+\binom{j}{d}}m^j\\
&\leq \MM^{4\nn}2^{-2\binom{\nn}{d}}\frac{(\indi)_{\nn}^2}{\indi^{2\nn}}\left(1+\frac{\indi^{2\nn}}{(\indi)_{\nn}^2}\sum\limits_{j=0}^{\nn}\sum\limits_{m=\max(j,1)}^{\nn}(\inda\indi)^{-m}\nn^{2m}2^{\binom{j}{d}}\indi^{m-j}\nn^{2j}\right)\\
&\leq \MM^{4\nn}2^{-2\binom{\nn}{d}}\frac{(\indi)_{\nn}^2}{\indi^{2\nn}}\left(1+\frac{\indi^{2\nn}}{(\indi)_{\nn}^2}\sum\limits_{m=1}^{\nn}\left(\frac{\nn^2}{\inda}\right)^m+\frac{\indi^{2\nn}}{(\indi)_{\nn}^2}\sum\limits_{j=1}^{\nn}\sum\limits_{m=j}^{\nn}\left(\frac{\nn^2}{\inda}\right)^m2^{\binom{j}{d}}\indi^{-j}\nn^{2j}\right)\\
&\leq  \MM^{4\nn}2^{-2\binom{\nn}{d}}\frac{(\indi)_{\nn}^2}{\indi^{2\nn}}\left(1+K\frac{\indi^{2\nn}}{(\indi)_{\nn}^2}\frac{\nn^2}{\inda}+K\frac{\indi^{2\nn}}{(\indi)_{\nn}^2}\sum\limits_{j=1}^{\nn}\left(\frac{\nn^2}{\inda}\right)^j2^{\binom{j}{d}}\indi^{-j}\nn^{2j}\right)\\
&\leq  \MM^{4\nn}2^{-2\binom{\nn}{d}}\frac{(\indi)_{\nn}^2}{\indi^{2\nn}}\left(1+K\frac{\indi^{2\nn}}{(\indi)_{\nn}^2}\frac{\nn^2}{\inda}+K\frac{\indi^{2\nn}}{(\indi)_{\nn}^2}\sum\limits_{j=1}^{\nn}\nn^{4j}\MM^{-2j}2^{\binom{j}{d}}\right).
\end{align*}
The constant $K$ in the second last line comes from a comparison with the sum of a geometric series since $\nn^2\ll \inda$. 

By Eq. \eqref{eqbof}, $\frac{\indi^{2\nn}}{(\indi)_{\nn}^2}\frac{\nn^2}{\inda}=\exp(\mathcal{O}(\nn)-\log(\inda))=o(1)$. Thus, by Lemma \ref{lemesp} for $\inda$ large enough:  
\begin{align*}
\Es{W^2}&\leq  \MM^{4\nn}2^{-2\binom{\nn}{d}}\frac{(\indi)_{\nn}^2}{\indi^{2\nn}}\left(1+o(1)+K\frac{\indi^{2\nn}}{(\indi)_{\nn}^2}\sum\limits_{j=d}^{\nn}\nn^{4\nn}\MM^{-2j}2^{\binom{j}{d}}\right)\\
&\leq (1+o(1))\Es{W}^2\left(1+K\frac{\indi^{2\nn}}{(\indi)_{\nn}^2}\sum\limits_{j=d}^{ n}\nn^{4\nn}\MM^{-2j}2^{\binom{j}{d}}\right).
\end{align*}
Now fix $\alpha \in (0,1)$. Since $\nn\log(\nn)=o(\log \MM)$ and $x\mapsto -\left(2\log \MM-\frac{(x-1)\dots (x-(d-1))\log{2}}{d!}\right) $ is an increasing function on $[d,+\infty[$ we have

\begin{align*}
\frac{\indi^{2\nn}}{(\indi)_{\nn}^2}\sum\limits_{d\leq j \leq \alpha \nn}\nn^{4\nn}\MM^{-2j}2^{\binom{j}{d}}&= \frac{\indi^{2\nn}}{(\indi)_{\nn}^2}\sum\limits_{d\leq j \leq \alpha \nn}\exp\left(-j\left(2\log \MM-\frac{(j-1)\dots (j-(d-1))\log{2}}{d!}\right)+4\nn\log \nn\right)\\
&= \frac{\indi^{2\nn}}{(\indi)_{\nn}^2}\sum\limits_{d\leq j \leq \alpha \nn}\exp\left(-jU_{\alpha,\inda}+o(\log \MM)\right)\\
&\leq \frac{\indi^{2\nn}}{(\indi)_{\nn}^2}\sum\limits_{d\leq j \leq \alpha \nn}\exp\left(-K_1j\log \MM\right)\quad \text{by Lemma \ref{calcul}}\\
&\leq K_3\exp(-K_2\log \MM+\mathcal{O}(\nn))=o(1).
\end{align*}
The second last line is a consequence of Lemma \ref{calcul} as $U_{\alpha, \inda}\geq K_1\log(\MM)$ for $\inda$ large enough. And the last inequality is true from a comparison with the sum of a geometric series and by Eq. \eqref{eqbof}.

We also have, since $\nn\log(\nn)=\mathcal{O}\left((\log \MM)^{\frac{1}{d-1}}\log(\log \MM)\right)$

\begin{align*}
    \frac{\indi^{2\nn}}{(\indi)_{\nn}^2}\sum\limits_{\alpha \nn<j\leq\nn}\nn^{4\nn}\MM^{-2j}2^{\binom{j}{d}}&=\frac{\indi^{2\nn}}{(\indi)_{\nn}^2} \sum\limits_{\alpha \nn<j\leq\nn}\exp\left(-j\left(2\log \MM-\frac{(j-1)\dots (j-(d-1))\log(2)}{d!}\right)+4\nn\log \nn\right)\\
&= \frac{\indi^{2\nn}}{(\indi)_{\nn}^2}\sum\limits_{\alpha \nn<j\leq\nn}\exp\left(-jU_{1,\inda}+\mathcal{O}\left((\log \MM)^{\frac{1}{d-1}}\log(\log \MM)\right)\right)\\
&\leq \frac{\indi^{2\nn}}{(\indi)_{\nn}^2}\nn\exp\left(-K'(d-2\beta_{\inda})\log \MM+\mathcal{O}\left((\log \MM)^{\frac{d-2}{d-1}}\log(\log \MM)\right)\right)\\
&\leq\exp\left(-K'(d-2\beta_{\inda})\log \MM+\mathcal{O}\left((\log \MM)^{\frac{d-2}{d-1}}\log(\log \MM)\right)\right).
\end{align*}
In the last asymptotic expansion, we used Eq. \eqref{eqbof}. Note that here the assumption $d\geq 3$ is crucial: the $\mathcal{O}\left((\log \MM)^{\frac{d-2}{d-1}}\log(\log \MM)\right)$ term enables us to take care of both the error term in $U_{1,\inda}$, and the $\mathcal{O}\left((\log \MM)^{\frac{1}{d-1}}\log(\log \MM)\right)$ if $d=3$.
Finally summing the two previous inequalities gives
\begin{align*}
\Es{W^2}&\leq (1+o(1))\Es{W}^2\left(1+K\exp\left(-K'(d-2\beta_{\inda})\log \MM+\mathcal{O}\left((\log \MM)^{\frac{d-2}{d-1}}\log(\log \MM)\right)\right)\right)
\end{align*}concluding the proof.\end{proof}

 With this upper bound on the second moment, we prove the following lemma:
 \begin{lem}[Towards a lower bound for Theorem \ref{grosthm}]\label{eq1}
  Let $(\varepsilon_{\inda})_{\inda\geq 1}$ be a bounded sequence such that $\varepsilon_{\inda}\gg\frac{\log\log\MM}{(\log \MM)^{\frac{1}{d-1}}}$. Assume that for $\inda$ large enough, $\indi \geq \lfloor x_{\inda}-\varepsilon_{\inda}\rfloor$. Then w.h.p. largest common induced subgraph of $\Gamma_1$ and $\Gamma_2$ has size greater or equal to $\lfloor x_{\inda}-\varepsilon_{\inda}\rfloor$: 
 $$\Pr{\exists (I,J)\in \mathcal{A}_{\inda,\lfloor x_{\inda}-\varepsilon_{\inda}\rfloor}\times \mathcal{A}_{\indi(\inda),\lfloor x_{\inda}-\varepsilon_{\inda}\rfloor} \ \text{such that}\   \Gamma_{1,I}=\Gamma_{2,J}}\xrightarrow[\inda\to\infty]{} 1$$
 where $(x_{\inda})_{\inda\geq 1}$ is defined in Eq. \eqref{defx}.
\end{lem}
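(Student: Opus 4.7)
The plan is to apply the second moment method (Paley--Zygmund inequality) to $W$ with $\nn = \lfloor x_{\inda} - \varepsilon_{\inda}\rfloor$, since having a common induced subgraph of that size is the event $\{W>0\}$ and $\Pr{W>0} \geq \Es{W}^2 / \Es{W^2}$. I would begin by choosing $\beta_{\inda} := \lfloor x_{\inda} - \varepsilon_{\inda}\rfloor - (a\log \MM)^{1/(d-1)}$ so that Eq. \eqref{defden} is satisfied with this value of $\nn$. Because $x_{\inda} = (a\log \MM)^{1/(d-1)} + d/2$, one gets $d/2 - \varepsilon_{\inda} - 1 < \beta_{\inda} \leq d/2 - \varepsilon_{\inda}$; in particular $(\beta_{\inda})_{\inda \geq 1}$ is bounded and, crucially, $d - 2\beta_{\inda} \geq 2\varepsilon_{\inda}$. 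The hypothesis that $\indi \geq \nn$ for $\inda$ large enough puts us in the nontrivial case of Lemma \ref{lemesp}, so that $\Es{W}$ admits the asymptotic expansion given there.

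I would then feed this into the second-moment lemma just established. It suffices to show that
$$K\exp\left(-K'(d-2\beta_{\inda})\log \MM + \mathcal{O}\left((\log \MM)^{(d-2)/(d-1)}\log(\log \MM)\right)\right) = o(1).$$
Using $d - 2\beta_{\inda} \geq 2\varepsilon_{\inda}$, the leading term of the exponent is at most $-2K'\varepsilon_{\inda}\log \MM$, and it dominates the error term exactly when $\varepsilon_{\inda}\log \MM \gg (\log \MM)^{(d-2)/(d-1)}\log(\log \MM)$, i.e.\ $\varepsilon_{\inda} \gg \log(\log \MM)/(\log \MM)^{1/(d-1)}$, which is precisely the standing hypothesis on $\varepsilon_{\inda}$. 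Consequently $\Es{W^2} \leq (1+o(1))\Es{W}^2$, and Paley--Zygmund yields $\Pr{W > 0} \to 1$, which is the desired conclusion.

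The main (and only) difficulty is this quantitative matching between the lower bound on $\varepsilon_{\inda}$ and the error term in the second-moment estimate. The slight asymmetry with Lemma \ref{eq2}, which required only $\varepsilon_{\inda} \gg (\log \MM)^{-1/(d-1)}$, stems from the additional $\log(\log \MM)$ factor in the error, which in turn traces back to the $\nn \log \nn$ contributions (coming from the $\nn^{4\nn}$ factors) handled during the second moment upper bound.
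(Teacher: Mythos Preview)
Your proposal is correct and matches the paper's proof essentially line by line: the same choice of $\beta_{\inda}$ (noting that $x_{\inda}=(a\log \MM)^{1/(d-1)}+d/2$), the same bound $d-2\beta_{\inda}\geq 2\varepsilon_{\inda}$, and the same verification that the hypothesis $\varepsilon_{\inda}\gg \log\log\MM/(\log\MM)^{1/(d-1)}$ is exactly what makes the error term in the second-moment lemma negligible, followed by Paley--Zygmund. Your closing remark explaining the origin of the extra $\log\log\MM$ factor is a nice addition not present in the paper.
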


\begin{proof}
Choose $(\beta_{\inda})_{\inda\geq 1}$ such that for every large $\inda$, $$\beta_{\inda}=\lfloor (a\log \MM)^{\frac{1}{d-1}}+\frac{d}2-\varepsilon_{\inda}\rfloor-(a\log \MM)^{\frac{1}{d-1}}.$$ Since $\frac{d}2-1-\varepsilon_{\inda}<\beta_{\inda}\leq \frac{d}{2}-\varepsilon_{\inda}$, which implies that $d-2\beta_{\inda}\geq 2\varepsilon_{\inda}$ and that $(\beta_{\inda})_{\inda\geq 1}$ is bounded. Moreover $\nn=\lfloor x_{\inda}-\varepsilon_{\inda}\rfloor$.

Therefore 
\begin{align*}
\Es{W^2}&\leq (1+o(1))\Es{W}^2\left(1+K\exp\left(-K'\varepsilon_{\inda}\log \MM+\mathcal{O}\left((\log \MM)^{\frac{d-2}{d-1}}\log(\log \MM)\right)\right)\right)\\
&\leq (1+o(1))\Es{W}^2\left(1+K\exp\left(-K'(1+o(1))\varepsilon_{\inda}\log \MM\right)\right)\quad \text{as $1-\frac{1}{d}>\frac{d-2}{d-1}$}\\
&\leq (1+o(1))\Es{W}^2\left(1+o(1)\right)\\
&\leq (1+o(1))\Es{W}^2.
\end{align*}

Thus since $\Pr{W>0}\geq \frac{\Es{W}^2}{\Es{W^2}}\geq 1+o(1)$, $\Pr{W>0}$ goes to $1$ as $\inda$ goes to infinity, which proves Lemma \ref{eq1}.\end{proof}

\begin{proof}[Proof of Theorem \ref{grosthm}]

First, we prove item 1 of Theorem \ref{grosthm}. For $\inda$ large enough:
\begin{itemize}
    \item since $\liminf{\frac{\log_2(\indi(\inda))}{(\log_2\inda)^{1-\frac{1}{d-1}}}}>0$, $\indi\geq \lfloor x_{\inda}-(\log_2\inda)^{-1/d}\rfloor$;
    \item $0\leq \lfloor x_{\inda}+(\log_2\inda)^{-1/d}\rfloor-\lfloor x_{\inda}-(\log_2\inda)^{-1/d}\rfloor\leq 1$ since $(\log_2\inda)^{-1/d}$ tends to $0$.
\end{itemize} Since $(\log_2\inda)^{-1/d}=\Theta\left((\log_2\MM)^{-1/d}
\right)\gg \frac{\log\log\MM}{(\log\MM)^{\frac{1}{d-1}}}$, item 1 of Theorem \ref{grosthm} is a consequence of Lemmas \ref{eq1} and \ref{eq2} applied with $\varepsilon_{\inda}=(\log_2\inda)^{-1/d}$.

We now proceed to the proof of items 2a and 2b of Theorem \ref{grosthm}. Assume that for all $\inda$ large enough \begin{equation*} \indi(\inda) \geq \bigg\lfloor \left(d! \log_2 \inda\right)^{\frac{1}{d-1}}+\frac{d}{2}-(\log_2\inda)^{-\frac{1}{d}}\bigg\rfloor\end{equation*}
and that $\log_2\indi=o\left((\log_2\inda)^{1-\frac{1}{d-1}}\right)$.
Note that 
\begin{align*}(d!\log_2(\inda\indi))^{\frac{1}{d-1}}&=\left(d!\log_2\inda\right)^{\frac{1}{d-1}}+\left(d!\log_2\inda\right)^{\frac{1}{d-1}}\left(\left(1+\frac{\log_2\indi}{\log_2\inda}\right)^{\frac{1}{d-1}}-1\right)\\
&=\left(d!\log_2\inda\right)^{\frac{1}{d-1}}+\mathcal{O}\left((\log_2\inda)^{\frac{1}{d-1}}\frac{\log_2\indi}{\log_2\inda}\right)\\
&=\left(d!\log_2\inda\right)^{\frac{1}{d-1}}+o\left(1\right)\quad \text{as $\log_2\indi=o\left((\log_2\inda)^{1-\frac{1}{d-1}}\right)$}.
\end{align*}
Take $(\mu_{\inda})_{\inda\geq 1}$ a sequence such that  $\mu_{\inda}\geq (\log_2 \inda)^{-1/d}$ for all $\inda\geq 2$ and $$1\gg \mu_{\inda}\gg \max\left(\left\vert \left(d!\log_2\inda\right)^{\frac{1}{d-1}}-(d!\log_2(\inda\indi))^{\frac{1}{d-1}}\right\vert, \frac{\log\log\MM}{(\log \MM)^{\frac{1}{d-1}}}\right).$$ Applying Lemma \ref{eq2} with $\varepsilon_{\inda}=\mu_{\inda}+ \left(d!\log_2\inda\right)^{\frac{1}{d-1}}-(d!\log_2(\inda\indi))^{\frac{1}{d-1}}$ and Lemma~\ref{eq1} with $\varepsilon_{\inda}=\mu_{\inda}-\left(d!\log_2\inda\right)^{\frac{1}{d-1}}+(d!\log_2(\inda\indi))^{\frac{1}{d-1}}$ implies item 2a of Theorem \ref{grosthm}.

For item 2b, note that if for $\inda$ large enough $$\indi(\inda)=\bigg\lfloor \left(d! \log_2 \inda\right)^{\frac{1}{d-1}}+\frac{d}{2}-(\log_2\inda)^{-\frac{1}{d}}\bigg\rfloor=\lfloor y_{\inda}-(\log_2\inda)^{-\frac{1}{d}}\rfloor,$$ the previous argument with $y_{\inda}=(\log \inda)^{-1/d}$ implies that w.h.p. the largest common induced subgraph has size at least $\lfloor y_{\inda}-(\log_2\inda)^{-1/d}\rfloor$. Thus it must be $\Gamma_2$: w.h.p. $\Gamma_2$ is an induced subgraph of $\Gamma_1$.

\noindent The general case for item 2b is a simple argument of monotonicity. If for $\inda$ large enough $\indi(\inda)\leq \bigg\lfloor \left(d! \log_2 \inda\right)^{\frac{1}{d-1}}+\frac{d}{2}-(\log_2\inda)^{-\frac{1}{d}}\bigg\rfloor$, we can complete $\Gamma_2$ in a uniform random $d$-hypergraph of size $\lfloor y_{\inda}-(\log_2\inda)^{-1/d}\rfloor$. Since for $\inda$ large enough w.h.p. this larger uniform $d$-hypergraph is an induced subgraph in $\Gamma_1$, the same goes for $\Gamma_2$, concluding the proof.\end{proof}

\section{Remaining questions}\label{conclusion}

\begin{itemize}

\item Theorem \ref{grosthm} implies, in the case of item 1 and 2a, that for many $\inda$'s, there is actually a one-point concentration: \emph{e.g.} for item 1 there are arbitrary large sequences of consecutive $\inda$ such that $\lfloor x_{\inda}+(\log_2 \inda)^{-1/d}\rfloor=\lfloor x_{\inda}-(\log_2 \inda)^{-1/d}\rfloor$. However, there are also arbitrary large sequences of consecutive $\inda$ such that $\lfloor x_{\inda}+(\log_2 \inda)^{-1/d}\rfloor= \lfloor x_{\inda}-(\log_2 \inda)^{-1/d}\rfloor$+1. In this case, it is still open to know if there is concentration in only one of these values. We would need more precise asymptotic estimates for the second moment, or a new method to answer this question.

\item Very recently in \cite{Diacogénéralisé}, Surya, Warnke and Zhu provided a generalization of the result of Chatterjee and Diaconis to  Erd\H{o}s-Renyi graphs of parameters different than $\frac12$. They show that for $p,q\in(0,1)$ the size of the largest common induced subgraph of a $G(N,q)$ and a $G(N,p)$ Erd\H{o}s-Renyi graph is logarithmic, and that there is also a two-point concentration phenomenon. This extension is however a lot more involved: to avoid the explosion of the second moment, they consider only pseudorandom graphs in the computation of the second moment.

Investigate whether the approach employed in \cite{Diacogénéralisé} can be used to prove that there is a two-point concentration phenomenon for analogous models of random hypergraphs of different size is a natural extension of the present paper.

\end{itemize}

\textbf{Acknowledgements.} This work benefited from earlier discussions with Tadas Temcinas, Eva Maria-Hainzl, Matias Pavez-Signé during the RandNET summer school in Eindhoven in 2022. I would also like to thank Lucas Gerin and Frédérique Bassino for useful discussions and for carefully reading many earlier versions of this manuscript.

\bibliographystyle{plain}
\bibliography{biblio}

\vfill 
\noindent \textsc{Théo Lenoir} \verb|theo.lenoir@polytechnique.edu|\\
\textsc{Cmap, Cnrs}, \'Ecole polytechnique,\\
Institut Polytechnique de Paris,\\
91120 Palaiseau, France

\end{document}